   \newtheorem{theorem}{Theorem}[section]
   \newtheorem{proposition}[theorem]{Proposition}
   \newtheorem{lemma}[theorem]{Lemma}
\theoremstyle{definition}
\newcommand{\RR}{{\mathbb{R}}}
\newcommand{\CH}{\operatorname{CH}}
\newcommand{\res}{\operatorname{res}}
\newcommand{\Link}{\operatorname{Link}}
\newcommand{\Span}{\operatorname{Span}}
\newcommand{\Star}{\operatorname{Star}}
\newcommand{\MW}{Minkowski weight }
\newcommand{\setmin}{\,\protect%
\begin{picture}(8,10)\qbezier(1,5.5)(4,4.)(7,2.5)\end{picture}\,}
\renewcommand{\setminus}{\setmin}
\newcommand{\bs}{\smallsetminus}
\begin{document}
\title{Hodge Theory of Matroids by Star
Subdivisions}

\author{Andy Hsiao}
\address{Mathematics Department\\ University of British Columbia \\
  1984 Mathematics Road\\
Vancouver, B.C. Canada V6T 1Z2}
\email{ahsiao@math.ubc.ca}

\begin{abstract}
    It is known that the Chow ring of a matroid satisfies the Hard Lefschetz property and Hodge Riemann relations. We provide a proof of this theorem by decomposition of the deletion operation into star subdivisions at two dimensional cones. This gives a simple combinatorial proof which avoids intersection cohomology or algebraic geometry.
\end{abstract}

\maketitle

\section{Introduction}
Matroids are combinatorial objects which capture the essence of independence. Before defining a matroid, we first give the special case of representable matroids. Let $V$ be a vector space, and $E= \{v_1, \dots, v_n\} \subset V$ be a finite set of vectors. Then the representable matroid $M$ defined by $E$ is the set $E$, along with its lattice of flats The flats of $M$ are subsets of $E$ of the form $W \cap E$, where $W$ is a linear subspace of $V$.

A basic example is the Boolean matroid on $n$ elements, which is a realizable matroid on the set $E = \{e_1, \dots, e_n\}$, where $E$ is a basis of an $n$-dimensional vector space $V$. In this case, every subset of $E$ is a flat.

Generally, a matroid $M$ on a set $E = \{ 1, \dots, n+1\}$ is a specification a collection of subsets of $E$, called flats, such that
\begin{enumerate}
    \item $E$ is a flat.
    \item For any two flats $F,G$, their intersection $F \cap G$ is also a flat.
    \item For any flat $F$ and any element $x \in E \setminus F$, there is a unique flat $F'$ amongst the flats minimally containing $F$, such that $x \in F'$.
\end{enumerate}

We will further assume that the empty set is a flat of $M$. A flat is said to be nontrivial if it is nonempty and proper. The set of flats form a poset $L=L(M)$ with respect to inclusion. The rank of $M$ is defined as $\text{rank}(L)-1$. We refer to \cite{Oxley} for more on matroids.

In \cite{FY}, Feichtner and Yuzvinsky defined the Chow ring of a matroid. They showed that for any representable matroid, there is a complete complex projective variety with cohomology equal to the Chow ring of the matroid. This in turn shows that the Chow ring of the matroid satisfies the Hard Lefschetz property and Hodge Riemann relations. In \cite{AHK}, Adiprasito, Huh, and Katz extended this result to show that the Chow ring of all matroids satisfies the Hard Lefschetz property and Hodge Riemann relations through considering matroidal flips. Another proof of the theorem was given in \cite{BHM} by deletion of elements. In the case of representable matroids, this is deletion of a vector from $E$. The aim of this article is to provide a simple combinatorial proof of the Hard Lefschetz property and Hodge Riemann relations. We do so by decomposing the deletion operation into more elementary steps.

The following theorem is first proved in \cite{AHK}.
\begin{theorem}\label{maintheorem}
    Let $M$ be a matroid of rank $d+1$, $\ell \in \CH^1(M)$ a strictly convex class, and $0 \leq i \leq d/2$. Then multiplication by $\ell$ defines an isomorphism
    \begin{align*}
        \CH^i(M) &\to \CH^{d-i}(M)\\
        x &\mapsto \ell^{d-2i}x.
    \end{align*}

    Further, on $P^i =\ker(\CH^i(M) \xrightarrow{\ell^{d-2i+1}} \CH^{d-i+1}(M))$, the quadratic form $(-1)^i Q_\ell$ is positive definite, where $Q_\ell$ is
    \begin{align*}
        Q_\ell:  P^i &\to \RR\\
        x &\mapsto \deg(x^2 \ell^{d-2i}).
    \end{align*}
\end{theorem}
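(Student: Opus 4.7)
The plan is to prove the theorem by induction on the size of the ground set $|E|$, implementing the strategy indicated in the abstract: factor the natural modification $\Sigma_{M \setminus i} \rightsquigarrow \Sigma_M$ of Bergman fans (or a closely related pair of matroidal simplicial fans) into a sequence of star subdivisions at $2$-dimensional cones, and propagate HL and HR through each subdivision. The base case is a matroid small enough that $\CH^\bullet(M)$ is concentrated in degree zero, where the theorem holds vacuously. The benefit of factoring deletion into these elementary steps, rather than comparing $\CH^\bullet(M)$ and $\CH^\bullet(M \setminus i)$ directly as in \cite{BHM}, is that each elementary step changes the fan in a minimally disruptive way and admits a clean blow-up formula for its Chow ring.

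The key inductive lemma I would isolate is the following: if a unimodular simplicial fan $\Sigma$ associated to a matroid satisfies HL and HR for all strictly convex $\ell \in \CH^1(\Sigma)$, and $\Sigma'$ is the star subdivision of $\Sigma$ at a $2$-dimensional cone $\tau$ introducing a new ray $\rho$, then $\Sigma'$ also satisfies HL and HR. The main algebraic input is a semi-orthogonal blow-up formula
\[
\CH^\bullet(\Sigma') \;\cong\; \CH^\bullet(\Sigma) \,\oplus\, x_\rho \cdot \CH^{\bullet-1}(\Link(\tau)),
\]
orthogonal with respect to the Poincar\'e pairing up to an explicit correction. The link $\Link(\tau)$ is itself a Bergman-type fan built from strictly smaller matroidal data (a localization and contraction of $M$), so by the outer induction it satisfies HL and HR. Combined with HL/HR on $\Sigma$, the HL isomorphism on $\Sigma'$ for a strictly convex class $\ell'$ follows after bookkeeping how multiplication by $\ell'$ mixes the two summands.

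The main obstacle is establishing the Hodge-Riemann signature on the primitive part $P^i \subset \CH^i(\Sigma')$. My approach is a deformation argument: first treat a distinguished near-boundary class $\ell'_0 = \pi^*\ell - \varepsilon\, x_\rho$ for small $\varepsilon > 0$ and a strictly convex $\ell$ on $\Sigma$, for which the HR signature on $\Sigma'$ can be read off directly from HR on $\Sigma$ and $\Link(\tau)$ via the blow-up decomposition; then deform $\ell'_0$ to a general strictly convex class $\ell'$ inside the (convex, hence connected) cone of strictly convex classes on $\Sigma'$, and invoke the standard continuity principle that a family of quadratic forms nondegenerate on a continuously varying kernel preserves its signature. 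The delicate point, and where the restriction to $2$-dimensional subdivisions is essential, is the signature calculation at $\ell'_0$: because the exceptional direction contributes only a single shifted copy of $\CH^\bullet(\Link(\tau))$, the alternating signs $(-1)^i$ required by HR assemble correctly from the single degree shift introduced by multiplication with $x_\rho$, rather than the more intricate alternating pattern one would face at a higher-dimensional subdivision. If this step can be carried out, the outer induction on $|E|$ closes and the theorem follows.
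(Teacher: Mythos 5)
Your overall strategy (factor deletion of a non-coloop into star subdivisions at two-dimensional cones, use a blow-up decomposition of the Chow ring, then a deformation of Lefschetz classes) is indeed the paper's strategy, but as written the proposal has two genuine gaps.

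First, the base case. Induction on $|E|$ with base case ``$\CH(M)$ concentrated in degree zero'' does not close: the deletion-by-star-subdivision mechanism only applies to non-coloops, and after deleting all non-coloops you land on a Boolean matroid of the same rank $d+1$, whose Bergman fan is a complete simplicial fan. For that fan the theorem is not vacuous and cannot be reached by further deletions inside your induction; it requires the Hard Lefschetz and Hodge--Riemann package for complete simplicial fans as an external input (McMullen's theorem, or the combinatorial proof the paper cites). The paper's induction is on the rank precisely so that this complete-fan case can serve as the anchor and so that the links occurring below are genuinely of smaller rank.

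Second, and more seriously, your key inductive lemma --- ``if $\Sigma$ satisfies HL and HR for \emph{all} strictly convex classes, then so does the subdivision at $\tau$'' --- is stronger than what the blow-up decomposition delivers, and your route to it breaks down. The decomposition $\CH(\hat\Delta)=\CH(\Delta)\oplus x_0\CH(\Link_\Delta\tau)$ is only compatible with multiplication by \emph{pullback} classes $s^{*}\ell$; these act diagonally on the two summands, which is why HL and HR transfer. A general strictly convex class on $\hat\Delta$ (including your $\ell_0'=\pi^{*}\ell-\varepsilon x_0$ for $\varepsilon>0$) does not preserve the decomposition, so neither the ``bookkeeping'' claim for HL nor the claim that the HR signature at $\ell_0'$ ``can be read off directly'' is justified; and your deformation from $\ell_0'$ to a general strictly convex class requires nondegeneracy (HL) at every point of the path, which is exactly what has not been established. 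The paper avoids this circle as follows: it propagates HL/HR along the chain of subdivisions only for the pullback class (which is merely convex, on the boundary of the ample cone of $\Delta_M$); it then proves HL for \emph{all} strictly convex classes on $\Delta_M$ by a separate argument (Proposition \ref{HR-implies-HL}), using that the links of rays of the Bergman fan are products of Bergman fans of lower rank, which satisfy HR by the rank induction together with the tensor-product lemma; only then does it deform from the boundary class to conclude HR for all strictly convex classes (Proposition \ref{HL-implies-HR}). Note also that this upgrade is performed only on the Bergman fan $\Delta_M$ itself, not on the intermediate fans of the factorization, whose ray links are not products of Bergman fans; your lemma, demanding the full statement for every intermediate fan, would need exactly such control and is not available. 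Your remark about why two-dimensional subdivisions make the single degree shift and the sign $(-1)^{i-1}$ match is correct in spirit, but the two gaps above must be repaired for the induction to close.
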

The Chow ring $\CH(M)$ of $M$ is defined in terms of the Stanley Reisner ring of the Bergman fan. The Bergman fan $\Delta_M$ of a matroid $M$ is a fan with cones defined by chains of nontrivial flats.

The original proof of the theorem in \cite{AHK} is by matroidal flips. In the case of a Boolean matroid, the Bergman fan is obtained by a barycentric subdivision of a complete fan. Each star subdivision in this sequence corresponds to a matroidal flip. For arbitrary matroids, the matroidal flips generalizes this process.

In \cite{BHM}, the authors give a proof of the theorem using the algebraic geometry of semismall maps from deletion of elements. Given any element $i \in E$, the deletion of $i$ from $M$ is the matroid $M \setminus i$ is the matroid on $E \setminus i$ with flats $F \setminus i$, where $F$ is a flat of $M$. We say an element $i \in E$ is a coloop of $M$ if after deleting $i$, the rank of $M \setminus i$ is lower than the rank of $M$. After deleting all non-coloops, the matroid becomes the Boolean matroid, whose fan is complete. Deleting a non-coloop $i$ gives rise to a map of fans $\pi: \Delta_{M} \to \Delta_{M \bs i}$, which was shown in \cite{BHM} to correspond to a semismall map of varieties. In \cite{dCM}, the authors showed that pull back of ample classes under semismall maps of varieties still satisfies Hard Lefschetz properties and Hodge Riemann relations. A combinatorial proof for toric varieties is given in \cite{KaruRelative}. This in turn shows $\Delta_M$ satisfies the Hard Lefschetz properties and Hodge Riemann relations if $\Delta_{M \bs i}$ does.

Our approach is to decompose the deletion map $\pi:\Delta_M \to \Delta_{M \bs i}$ into more elementary steps, as outlined in \cite{HKY}. There is a sequence of maps
\begin{align*}
    \Delta_{M} = \Delta_0 \to \Delta_1 \to \dots \to \Delta_k \to \Delta_{M \bs i}
\end{align*} which decomposes $\pi$, where each map $\Delta_j \to \Delta_{j+1}$ is a star subdivision at a two dimensional cone, and the last map $\Delta_k \to \Delta_{M \bs i}$ is a projection which induces an isomorphism of Chow ring. Given a strictly convex class $\ell \in \CH(\Delta_{M \bs i})$, the pullback $\pi^\ast \ell$ is a convex class in $\CH(\Delta_M)$. We show that the pullback of $\ell$ defines Hard Lefschetz properties and Hodge Riemann relations for all $\Delta_i$. Then by a deformation argument, we recover the proof of Theorem \ref{maintheorem} for all strictly convex classes on $\Delta_M$. This provides a combinatorial proof, which avoids intersection cohomology, and involves simpler combinatorics than the matroidal flips used in \cite{AHK}.

A similar techinque is used in \cite{ADH} to prove the conormal fan of $M$ satisfies the Hard Lefschetz property and Hodge Riemann relations. The authors define Lefschetz fans as those that satisfy Hard Lefschetz and Hodge Riemann, and further that the star of every cone is again a Lefschetz fan. They then showed that this property is preserved under star subdivisions and undoing star subdivisions.

\section{Poincar\'e duality algebras}
For this section, $H$ is a graded $\RR$-algebra with highest degree $d$ satisfying Poincar\'e duality. That is, there is a non-degenerate paring
    \begin{align*}
        H^i \times H^{d-i} \to H^d \cong \RR
    \end{align*} for all $i$, where the first pairing is given by multiplication in $H$, and we denote the isomorpism as $\deg\colon H^d \to \RR$. For $\ell \in H^1$, we say that the pair $(H,\ell)$ satisfies the \emph{Hard Lefschetz property} if for all $0 \leq i \leq d/2$, the linear map given by
        \begin{align*}
            H^i &\to H^{d-i}\\
            x &\mapsto \ell^{d-2i}x
        \end{align*} is an isomorphism. The bilinear form associated to $\ell$ is the map
        \begin{align*}
            B_\ell: H^{i} \times H^{i} \to \RR, (x,y) \mapsto \deg(x\cdot\ell^{d-2i}y).
        \end{align*}
        We then say $(H,\ell)$ satisfies the \emph{Hodge Riemann relations} if on the primitive part
        \[
            P_\ell^i = \ker(H^i \xrightarrow{\ell^{d-2i+1}} H^{d-i+1}),
        \] the quadratic form $(-1)^iQ_\ell$ is positive definite, where
        \begin{align*}
            Q_\ell \colon P_\ell^i \to \RR, x \mapsto B_\ell(x,x).
        \end{align*}

    For $\ell \in H^1$, the pair $(H,\ell)$ satisfies the Hard Lefschetz property if and only if the bilinear form $B_\ell$ is non-degenerate. Indeed, for any nonzero $y \in H^i$, by Poincar\'e duality, $\deg(x\cdot \ell^{d-2i}y)=0$ for all $x$ if and only if $\ell^{d-2i}y=0$.

    Assuming $(H,\ell)$ satisfies the Hard Lefschetz property, for each $m$, we get an isomorphism $\ell^{d-2m-2}: H^{m-1}\to H^{d-m+1}$. This then gives a decomposition $H^m = P^m \oplus \ell H^{m-1}$. Inductively, this gives the Lefschetz decomposition
    \begin{equation*}
        H^{m} = \bigoplus_{i=0}^m \ell^iP^{m-i}.
    \end{equation*} Further, this decomposition is orthogonal with respect to $B_\ell$. Indeed, for any $\ell^i f \in \ell^iP^{m-i}, \ell^j g \in \ell^jP^{m-j}$. Then $B_\ell(\ell^i f, \ell^j g) = \deg(\ell^{d-2m+i+j}fg)$. However, $\ell^{d-2m+i+j}g =0$ if $i > j$, since $d-2m+i+j > d-2(m-j)$.
    \begin{lemma}\label{HR-signature}
        Let $\ell \in H^1$ and suppose $(H,\ell)$ satisfies the Hard Lefschetz property. Then $(H,\ell)$ satisfies the Hodge Riemann relations if and only if the signature of the $Q_\ell$ on $H^m$ is
        \[
            \sum_{i=0}^m (-1)^i\dim(P_\ell^i).
        \]
    \end{lemma}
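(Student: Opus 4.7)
The plan is to use the Lefschetz decomposition $H^m = \bigoplus_{i=0}^m \ell^i P_\ell^{m-i}$ recalled just before the lemma, together with the fact that this decomposition is orthogonal with respect to $B_\ell$. Since signatures of symmetric bilinear forms add over orthogonal direct sums, computing the signature of $Q_\ell$ on $H^m$ reduces to computing the signature of $Q_\ell$ on each piece $\ell^i P_\ell^{m-i}$ and summing.

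The first step is a short degree bookkeeping showing that multiplication by $\ell^i$ is an isometry from $(P_\ell^{m-i}, Q_\ell)$ to $(\ell^i P_\ell^{m-i}, Q_\ell)$: for $x \in P_\ell^{m-i}$,
\[
Q_\ell(\ell^i x) = \deg\bigl(\ell^{2i} x^2 \ell^{d-2m}\bigr) = \deg\bigl(x^2 \ell^{d-2(m-i)}\bigr),
\]
which is exactly $Q_\ell(x)$ computed at degree $m-i$. Hard Lefschetz guarantees that $\ell^i \colon P_\ell^{m-i} \to \ell^i P_\ell^{m-i}$ is an isomorphism, so the two forms are genuinely isometric. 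Writing $s_j$ for the signature of $Q_\ell$ on the primitive part $P_\ell^j$, one obtains
\[
\sigma\bigl(Q_\ell|_{H^m}\bigr) = \sum_{j=0}^m s_j.
\]

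For the forward direction, Hodge-Riemann gives $(-1)^j Q_\ell$ positive definite on $P_\ell^j$, hence $s_j = (-1)^j \dim P_\ell^j$ and summing produces the claimed formula. For the converse, applying the signature identity at level $m$ and at level $m-1$ (i.e. a straightforward induction on $m$, with $m=0$ as the base) isolates $s_m = (-1)^m \dim P_\ell^m$. Since a real symmetric form whose signature has absolute value equal to its rank must be definite of that sign, this forces $(-1)^m Q_\ell$ to be positive definite on $P_\ell^m$, which is Hodge-Riemann in degree $m$. There is no substantial obstacle once the orthogonality of the Lefschetz decomposition is in hand; the only point requiring mild care is that $B_\ell$ and $Q_\ell$ implicitly depend on the degree at which they are evaluated (through the power of $\ell$ in their definition), and the intertwining identity above is precisely the check that the instances at degrees $m$ and $m-i$ agree under multiplication by $\ell^i$.
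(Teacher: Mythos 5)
Your proof is correct and follows essentially the same route as the paper's: the paper runs an induction on $m$ using the two-term orthogonal decomposition $H^m = P_\ell^m \oplus \ell H^{m-1}$, which is exactly your Lefschetz decomposition unrolled one step at a time. The only substantive difference is that you make explicit the degree bookkeeping showing $\ell^i$ is an isometry from $(P_\ell^{m-i}, Q_\ell)$ onto $(\ell^i P_\ell^{m-i}, Q_\ell)$, a check the paper leaves implicit.
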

    \begin{proof}
        We show this by induction on $m$. For $m=0$, this is immediate as $H^0=P_\ell^0 \cong \RR$, and a form on $\RR$ is positive definite if and only if its signature is 1. For $m \geq 2$, we have the orthogonal decomposition
        \[H^m = P_\ell^m \oplus \ell H^{m-1}.\]
        So the signature of $Q_\ell$ on $H^m$ is the sum of its signature of $P_\ell^m$ and its signature on $H^{m-1}$. By induction hypothesis, the signature of $Q_\ell$ on $H^{m-1}$ is $\sum_{i=0}^{m-1}(-1)^i\dim(P_\ell^i)$. So $Q_\ell$ has signature $\sum_{i=0}^m (-1)^i\dim(P_\ell^i)$ on $H^m$ if and only if it has signature $(-1)^m\dim(P_\ell^m)$ on $P_\ell^m$. And the statement that $(-1)^iQ_\ell$ is positive definite on $P_\ell^m$ is exactly when it has signature $(-1)^m\dim(P_\ell^m)$ on $P_\ell^m$.
    \end{proof}

    In the case where $(H, \ell)$ satisfies the Hodge Riemann relations, a deformation of $\ell$ will also satisfy the Hodge Riemann relations, given every form in between also satisfies the Hard Lefschetz property.

    \begin{proposition}\label{HL-implies-HR}
        Let $U \subset H^1$ be a connected subset. Suppose that for all $\ell \in U$, the pair $(H,\ell)$ satisfies the Hard Lefschetz property. Then for all $\ell \in U$ and $0 \leq i \leq d/2$, the bilinear forms $Q_\ell$ has the same signature on $H^i$. In particular, if for some $\ell_0 \in U$, $(H, \ell_0)$ satisfies the Hodge Riemann relations, then for all $\ell \in U$, $(H,\ell)$ satisfies the Hodge Riemann relations.
    \end{proposition}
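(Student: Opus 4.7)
The plan is a standard deformation argument for the signature of a continuously varying family of symmetric bilinear forms. First, I would note that for each fixed $i$, the assignment $\ell \mapsto B_\ell|_{H^i \times H^i}$ is polynomial in $\ell$, hence a continuous map from $H^1$ into the space of symmetric bilinear forms on $H^i$. By the equivalence recorded just before Lemma \ref{HR-signature}, namely that $(H,\ell)$ satisfies the Hard Lefschetz property in degree $i$ if and only if $B_\ell$ is non-degenerate on $H^i$, the hypothesis translates into saying that this continuous family lies entirely in the open locus of non-degenerate symmetric bilinear forms on $H^i$.

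Next I would invoke the classical fact that signature is locally constant on the non-degenerate locus: the eigenvalues of a symmetric matrix depend continuously on parameters, and no eigenvalue can switch sign without first passing through $0$, which would force degeneracy. The restriction of a locally constant function to a connected subset is constant, so $Q_\ell$ has the same signature on $H^i$ for every $\ell \in U$ and every $0 \leq i \leq d/2$. This establishes the first conclusion of the proposition.

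For the second conclusion, I would combine the above with Lemma \ref{HR-signature}. The key bookkeeping point is that under Hard Lefschetz the Lefschetz decomposition $H^i = P_\ell^i \oplus \ell H^{i-1}$, together with injectivity of multiplication by $\ell$ on $H^{i-1}$ (a factor of the isomorphism $\ell^{d-2i+2} \colon H^{i-1} \isomto H^{d-i+1}$), yields $\dim P_\ell^i = \dim H^i - \dim H^{i-1}$, which is manifestly independent of $\ell \in U$. Consequently the target quantity $\sum_{i=0}^m (-1)^i \dim P_\ell^i$ appearing in Lemma \ref{HR-signature} is a single integer depending only on $H$. If $(H, \ell_0)$ satisfies the Hodge Riemann relations then Lemma \ref{HR-signature} identifies the signature of $Q_{\ell_0}$ on $H^m$ with that integer; by the first part, $Q_\ell$ has the same signature for every $\ell \in U$, and the converse direction of Lemma \ref{HR-signature} delivers Hodge Riemann at every $\ell \in U$. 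I do not anticipate any real obstacle here; the only subtlety worth emphasizing is the $\ell$-independence of the primitive dimensions, without which the target signature in Lemma \ref{HR-signature} could a priori vary along $U$ and the deformation argument would stall.
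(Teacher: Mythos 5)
Your proof is correct and follows essentially the same route as the paper: non-degeneracy from Hard Lefschetz, continuity and integrality of the signature forcing constancy on the connected set $U$, and then Lemma \ref{HR-signature} to transfer the Hodge Riemann relations from $\ell_0$ to all of $U$. Your explicit remark that $\dim P_\ell^i = \dim H^i - \dim H^{i-1}$ is independent of $\ell$ is a detail the paper leaves implicit, and it is a worthwhile clarification rather than a divergence.
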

    \begin{proof}
        Since each pair $(H, \ell)$ satisfies the Hard Lefschetz property, each bilinear from $Q_\ell$ has full rank. Then for each $i$, the map $U \to \RR$ sending $\ell$ to the signature of $Q_\ell$ on $H^i$ is continuous. Now the map is further integral, so it must be constant on $U$.

        Suppose for some $\ell_0 \in U$, $(H, \ell_0)$ satisfies the Hodge Riemann relations. Since for each $\ell \in U$, $Q_\ell$ have the same signature as $Q_{\ell_0}$, by Lemma \ref{HR-signature}, each $\ell$ also satisfies the Hodge Riemann relations.
    \end{proof}

    We will also need that tensor products of rings satisfying Hard Lefschetz properties and Hodge Riemann relations also satisfies Hard Lefschetz and Hodge Riemann. A proof of this fact can be found in \cite{AHK}.

    \begin{lemma}\label{product-HL-HR}
        Let $H_1,H_2$ be graded rings satisfying Poincar\'e duality, and let $\ell_1 \in H_1^1$ and $\ell_2 \in H_2^1$. Suppose $(H_1, \ell_1)$ and $(H_2,\ell_2)$ both satisfies Hard Lefschetz and Hodge Riemann, then so does $(H_1 \otimes H_2, \ell_1 \otimes 1 + 1 \otimes \ell_2)$.
    \end{lemma}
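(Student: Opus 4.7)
First, I would dispose of Poincar\'e duality on $H := H_1 \otimes H_2$. The top degree is $d := d_1 + d_2$ with $H^d = H_1^{d_1} \otimes H_2^{d_2}$, and the natural degree map is $\deg_1 \otimes \deg_2$. The pairing $H^i \times H^{d-i} \to H^d$ splits over the bigrading into sums of tensor products of the nondegenerate pairings on the factors, hence is nondegenerate.

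For Hard Lefschetz with respect to $L := \ell_1 \otimes 1 + 1 \otimes \ell_2$, I would package HL on each factor into an $\mathfrak{sl}_2$-module structure: on $H_k$, multiplication by $\ell_k$ plays the role of the raising operator $e_k$, and the Lefschetz decomposition $H_k = \bigoplus_m \bigoplus_a \ell_k^a P_{\ell_k}^{m-a}$ exhibits $H_k$ as a direct sum of irreducible $\mathfrak{sl}_2$-modules, one copy of $V(d_k - 2p)$ for every basis vector of $P_{\ell_k}^p$. On $H_1 \otimes H_2$ the diagonal action has raising operator equal to multiplication by $L$, and by Clebsch--Gordan each tensor product $V(d_1 - 2p) \otimes V(d_2 - 2q)$ decomposes as a sum of irreducibles for the diagonal $\mathfrak{sl}_2$. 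Semisimplicity then forces $L^{d-2m}\colon H^m \to H^{d-m}$ to be an isomorphism for all $m \leq d/2$, which is HL.

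The main obstacle is Hodge Riemann, and here I would prefer a deformation argument over a direct computation with primitive pieces. Consider the family $L_s := \ell_1 \otimes 1 + s \cdot 1 \otimes \ell_2$ for $s > 0$. The $\mathfrak{sl}_2$-argument above goes through verbatim with $\ell_2$ replaced by $s\ell_2$, so $(H, L_s)$ satisfies HL for every $s > 0$. By Proposition \ref{HL-implies-HR}, it then suffices to verify HR at a single value of $s$. Using the orthogonal Lefschetz decomposition of each factor, I would identify the primitive part $P_{L_s}^m \subset H^m$ as a direct sum, indexed by pairs $(p,q)$ with $p+q \leq m$, of pieces built from $P_{\ell_1}^p \otimes P_{\ell_2}^q$; the restriction of $B_{L_s}$ to each such piece factors, up to a positive combinatorial scalar coming from the $\mathfrak{sl}_2$-representation theory, as a tensor of the forms $B_{\ell_1}$ on $P_{\ell_1}^p$ and $B_{\ell_2}$ on $P_{\ell_2}^q$. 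The HR hypotheses on the two factors give signs $(-1)^p$ and $(-1)^q$, whose product yields the desired $(-1)^m$ definiteness on the summand associated to $(p, q)$; summing over $(p, q)$ and applying Lemma \ref{HR-signature} gives HR for $(H, L_s)$.

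The key technical step is the orthogonality and sign-accounting in the final paragraph; once the Lefschetz decomposition is aligned with the Clebsch--Gordan decomposition, the computation becomes a bookkeeping exercise on irreducible $\mathfrak{sl}_2$-modules, but one must verify that the cross-terms between summands with distinct $(p,q)$ indeed vanish under $B_{L_s}$, which is where I expect to spend the most care.
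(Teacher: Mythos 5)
The paper does not prove this lemma itself (it cites \cite{AHK}), and your overall strategy --- Poincar\'e duality via the bigrading, Hard Lefschetz via the diagonal $\mathfrak{sl}_2$-action and Clebsch--Gordan, Hodge--Riemann via the decomposition of $P_L^m$ into pieces built from $P_{\ell_1}^p \otimes P_{\ell_2}^q$ --- is exactly the standard route taken in the literature. The Poincar\'e duality and Hard Lefschetz parts of your argument are fine. (The detour through $L_s = \ell_1\otimes 1 + s\cdot 1\otimes\ell_2$ buys you nothing, incidentally: you never pass to a limit $s\to 0$ or $s\to\infty$, so you still have to do the full primitive-piece computation at some fixed $s$, which works just as well at $s=1$.)

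The genuine gap is in your sign accounting for Hodge--Riemann, and it is not merely a detail to be checked later: as stated it is false. You claim the restriction of $B_L$ to the summand of $P_L^m$ coming from $P_{\ell_1}^p\otimes P_{\ell_2}^q$ is a \emph{positive} multiple of $B_{\ell_1}\otimes B_{\ell_2}$, and that the signs $(-1)^p$ and $(-1)^q$ then combine to give $(-1)^m$-definiteness. But $(-1)^p(-1)^q = (-1)^{p+q}$, which differs from $(-1)^m$ by $(-1)^t$ with $t = m-p-q$, the Clebsch--Gordan depth of the summand. The correct statement is that the combinatorial scalar carries the sign $(-1)^t$; it is positive only when $t$ is even. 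The simplest example already shows this: take $H_1 = H_2 = \RR[x]/(x^2)$ with $\ell_1 = \ell_2 = x$ and $\deg(x)=1$, so $d_1=d_2=1$ and both factors satisfy HL and HR. In $H = \RR[x,y]/(x^2,y^2)$ with $L = x+y$, the primitive space $P_L^1$ is spanned by $x-y$, which is the $t=1$ piece built from $P_{\ell_1}^0\otimes P_{\ell_2}^0$; here $B_{\ell_1}(1,1)=B_{\ell_2}(1,1)=1>0$, yet $Q_L(x-y) = \deg\bigl((x-y)^2\bigr) = -2 < 0$, exactly as HR requires for $m=1$ but opposite to what your ``positive scalar'' claim predicts. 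Establishing that the scalar has sign $(-1)^t$ (together with the $B_L$-orthogonality of summands with distinct $(p,q,t)$) is precisely the nontrivial computation at the heart of this lemma --- it is where the cited proof in \cite{AHK} does real work --- so the proposal as written does not yet constitute a proof.
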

\section{Fans}
Let $V$ be a $n$ dimensional vector space over $\RR$. A \emph{convex polyhedral cone} in $V$, or just cone, is a set
    \[
        \sigma = \left\{ \sum_{i=1}^m a_iv_i: a_i\in \RR_{\geq 0}\right\},
    \] where $v_1, \dots, v_m$ are vectors in $V$. A face of $\sigma$ is a subset of the form $\sigma \cap H_m$, where $H_m$ is the kernel of a linear map $m: V \to \RR$ such that $m(x) \geq 0$ for all $x \in \sigma$. A one dimensional face is called a ray. A cone is simplicial if $v_1, \dots, v_m$ are linearly independent.

    A \emph{fan} $\Delta$ is a collection of cones such that if $\sigma \in \Delta$ and $\tau$ a face of $\sigma$, then $\tau \in \Delta$. Further, if $\sigma_1, \sigma_2 \in \Delta$, then $\sigma_1 \cap \sigma_2$ is a common face of $\sigma_1$ and $\sigma_2$. For each ray $\rho_i \in \Delta$, we fix a generator $v_i$. A fan is simplicial if it consists of only simplicial cones. The combinatorial data of a simplicial fan is equivalent to a simplicial complex, although a fan further specifies an embedding in $V$. All fans in this paper will be simplicial.

    Let $\Delta$ be a fan in $V$. For a cone $\tau \in \Delta$, we define the star of $\tau$ in $\Delta$ as the subfan
    \begin{align*}
        \Star_\Delta \tau = \{ \sigma \in \Delta \mid \text{there exists } \gamma \in \Delta \text{ such that } \tau \leq \gamma \text{ and } \sigma \leq \gamma\}.
    \end{align*} We define the link of $\tau$ in $\Delta$ as the subfan
    \begin{align*}
        \Link_{\Delta} \tau = \{ \sigma \in \Star_\Delta \tau \mid \tau \cap \sigma = 0\}.
    \end{align*} We consider $\Star_\Delta \tau$ as embedded in $V$ as a subfan of $\Delta$. However, we consider $\Link_\Delta \tau$ as embedded in $V/ \Span{\tau}$.

    Let $\Delta_1$ and $\Delta_2$ in $V_1$ and $V_2$ be two fans. A morphism of fans $f: \Delta_1 \to \Delta_2$ is a linear map $\tilde{f}: V_1 \to V_2$ such that for any cone $\sigma_1$ of $\Delta_1$, we have $\tilde{f}(\sigma_1) \subset \sigma_2$ for some cone $\sigma_2$ of $\Delta_2$.

    For two fans $\Delta_1$ and $\Delta_2$ in $V_1$ and $V_2$ respectively, we define the product fan as
    \begin{align*}
        \Delta_1 \times \Delta_2 = \{ \sigma_1 \times \sigma_2 \subset V_1 \times V_2: \sigma_1 \in \Delta_1, \sigma_2 \in \Delta_2\}.
    \end{align*} A fan $\Delta$ in $V_1 \times V_2$ can have the same combinatorial data as the abstract fan $\Delta_1 \times \Delta_2$, but need not have the same embedding.

\subsection{Stanley Reisner rings}
    Let $\Delta$ be a simplicial fan in $V$ of pure dimension $d$ with ray generators $v_1, \dots, v_N$. The \emph{Stanley-Reisner ring} of $\Delta$ is
    \[
        \mathcal{A}(\Delta) = \RR[x_1, \dots, x_N]/I,
    \] where $I$ is generated by the set of square-free monomials $\prod_{i \in S}x_i$, where $\{v_i \} _{i \in S}$ does not generate a cone. This ring is graded by degree. The Stanley-Reisner ring $\mathcal{A}(\Delta)$ is also isomorphic to the ring of piecewise polynomial functions on $\Delta$, with the isomorphism given by mapping $x_i$ to a piecewise linear function such that $x_i(v_j) = \delta_{ij}$.

    Let $\theta_1, \dots, \theta_n$ be a basis of $V^\ast$, viewed as global linear functions on $V$. Identifying the Stanley Reisner ring with piecewise polynomial functions on $\Delta$, we have $\theta_1, \dots, \theta_n \in \mathcal{A}^1(\Delta)$. We then define the \emph{Chow ring} of $\Delta$ as
    \[
        \CH(\Delta) = \mathcal{A}(\Delta)/(\theta_1, \dots, \theta_n).
    \] The Chow ring $\CH(\Delta)$ is a graded ring, and is spanned by the monomials $x_\sigma = \prod_{\rho_i \in \sigma} x_i$, for all cones $\sigma \in \Delta$. In particular, $\CH(\Delta)$ has highest degree at most $\dim(\Delta)$.

    Let $f: \Delta_1 \to \Delta_2$ be a morphism of fans. There is then a map $f^\ast:\mathcal{A}(\Delta_2) \to \mathcal{A}(\Delta_1)$ defined by pullback of piecewise polynomial functions. Global linear functions pull back to global linear functions, so this further gives a morphism of Chow rings $\CH(\Delta_2) \to \CH(\Delta_1)$.

    Let $\Delta_1 \times \Delta_2$ be the product of two fans. On each cone $\sigma_1 \times \sigma_2$, the set of polynomials are given by $\mathcal{A}(\sigma_1 \times \sigma_2) = \mathcal{A}(\sigma_1) \otimes \mathcal{A}(\sigma_2)$. So as piecewise polynomials, we have the isomorphism of rings
    \[
        \mathcal{A}(\Delta_1 \times \Delta_2) \cong \mathcal{A}(\Delta_1) \otimes \mathcal{A}(\Delta_2).
    \]
    Taking the quotient by global linear functions, we get the isomorphism of rings
    \[
        \CH(\Delta_1 \times \Delta_2) \cong \CH(\Delta_1) \otimes \CH(\Delta_2).
    \]

\subsection{Minkowski weights}
For a $d$-dimensional fan $\Delta$ in $V$, a \emph{Minkowski weight of degree $d$} is a vector of real numbers $(w_\sigma)_{\sigma \in \Delta^d}$ satisfying the following balancing condition. For any $\tau \in \Delta^{d-1}$, let $\sigma_1, \dots, \sigma_s$ be the $d-$dimensional cones containing $\tau$. Denote by $v_i$ the ray in $\sigma_i \setminus \tau$, then
    \[
        \sum_{i=1}^s w_{\sigma_i}v_i \in \Span{\tau}.
    \] We denote the set of all Minkowski weights on $\Delta$ by $\MW(\Delta)$.

    Each Minkowski weight defines a linear map $\deg_w: \CH^d(\Delta) \to \RR$ given by $\deg_w(x_\sigma) = w_\sigma$, where $x_\sigma = \prod_{\rho_i \in \sigma} x_i$. This defines an isomorphism $\MW(\Delta) \cong \CH^d(\Delta)^\ast$. A proof of this isomorphism is given in \cite{HKY}.

    For a cone $\tau \in \Delta$, consider the projection map $\pi: V \to V/\Span{\tau}$. This defines a map of fans $\pi: \Star_\Delta\tau \to \Link_\Delta \tau$, which induces the pullback map of Chow rings
    \begin{align*}
        \pi^\ast: \CH(\Link_{\Delta}\tau) & \to \CH(\Star_\Delta \tau) \\
        x_\gamma &\mapsto x_\gamma.
    \end{align*}

    \begin{lemma}\label{Link-Star}
        The above map $\pi^\ast$ is an isomorphism of algebras.
    \end{lemma}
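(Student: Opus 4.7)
The plan is to identify $\CH(\Star_\Delta \tau)$ and $\CH(\Link_\Delta \tau)$ directly, exploiting the fact that, as an abstract simplicial complex, $\Star_\Delta \tau$ is the join of $\tau$ with $\Link_\Delta \tau$, and then to kill the linear forms in two stages using an adapted basis of $V^*$. I would label the rays of $\tau$ as $\rho_1, \dots, \rho_k$ and the remaining rays of $\Star_\Delta \tau$ as $\rho_{k+1}, \dots, \rho_N$, with generators $v_1, \dots, v_N$, and take $\pi(v_i)$ for $i > k$ as the generators of the rays of $\Link_\Delta \tau$ in $V/\Span{\tau}$. Because every cone of $\Star_\Delta \tau$ is a face of a simplicial cone $\gamma \in \Delta$ containing $\tau$, a subset $S \subseteq \{\rho_1, \dots, \rho_N\}$ spans a cone of $\Star_\Delta \tau$ if and only if $S \cap \{\rho_{k+1}, \dots, \rho_N\}$ spans a cone of $\Link_\Delta \tau$. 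This yields a ring isomorphism
\[
  \mathcal{A}(\Star_\Delta \tau) \;\cong\; \RR[x_1, \dots, x_k] \otimes \mathcal{A}(\Link_\Delta \tau).
\]

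Next, I would pick a basis of $V^*$ adapted to $\tau$: choose $\theta_1, \dots, \theta_k \in V^*$ with $\theta_j(v_i) = \delta_{ji}$ for $i,j \leq k$, and complete it with $\theta_{k+1}, \dots, \theta_n$ annihilating $\Span{\tau}$; the latter then descends to a basis of $(V/\Span{\tau})^*$. As piecewise linear functions on $\Star_\Delta \tau$,
\[
  \theta_j = x_j + \sum_{i>k} \theta_j(v_i)\, x_i \text{ for } j \leq k, \qquad \theta_j = \sum_{i>k} \theta_j(v_i)\, x_i \text{ for } j > k.
\]
In the quotient $\CH(\Star_\Delta \tau) = \mathcal{A}(\Star_\Delta \tau)/(\theta_1, \dots, \theta_n)$, the first $k$ relations allow me to eliminate $x_1, \dots, x_k$ from the first tensor factor, and what remains is $\mathcal{A}(\Link_\Delta \tau)$ modulo the images of $\theta_{k+1}, \dots, \theta_n$. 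These images are precisely the piecewise linear representations on $\Link_\Delta \tau$ of the descended basis of $(V/\Span{\tau})^*$, so the resulting quotient is $\CH(\Link_\Delta \tau)$.

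Finally, I would check that the composite identification agrees with $\pi^*$. Since $\pi(v_j) = 0$ for $j \leq k$ and $\pi(v_j)$ is the chosen generator of a ray of $\Link_\Delta \tau$ for $j > k$, the pullback of the piecewise linear function $x_i$ with $i>k$ on $\Link_\Delta \tau$ takes the value $\delta_{ij}$ on $v_j$, and therefore equals the piecewise linear function $x_i$ on $\Star_\Delta \tau$; by multiplicativity, $\pi^* x_\gamma = x_\gamma$ for every cone $\gamma \in \Link_\Delta \tau$. The main subtlety is justifying the join decomposition of $\Star_\Delta \tau$ and the resulting tensor splitting of Stanley--Reisner rings; once that is in place, the two-stage elimination of linear forms is routine bookkeeping.
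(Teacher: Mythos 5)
Your proof is correct and follows essentially the same route as the paper: both arguments eliminate the variables $x_j$ attached to the rays of $\tau$ using linear functionals dual to the generators of $\tau$, so that only the link variables remain. Your version is somewhat more complete than the paper's sketch, since you make the join decomposition $\mathcal{A}(\Star_\Delta \tau) \cong \RR[x_1,\dots,x_k]\otimes\mathcal{A}(\Link_\Delta \tau)$ explicit, track that the remaining linear forms descend exactly to the linear relations of $\CH(\Link_\Delta\tau)$, and verify compatibility with $\pi^\ast$ on generators.
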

    \begin{proof}
        For any $f \in \mathcal{A}(\Star_\Delta \tau)$, we look for linear relations $\theta_i$ to eliminate variables $x_k$ of $f$, where $\rho_k \in \tau$. Then $f$ can be written by using only $x_k$ for $\rho_k \in \Link_\Delta \tau$, which allows us to construct an inverse to $\pi^\ast$.

        Let $v_1, \dots, v_m$ be the ray generators of $\tau$. It suffices for $1\leq j \leq m$ to write each $x_j$ as a linear combination of variables $\{x_k\}_{\rho_k \notin \tau}$. Now each $\theta_i$ can be viewed as a linear function on $V$, so there are linear combinations $\tilde{\theta_1}, \dots \tilde{\theta_m}$ with
        \begin{align*}
            \tilde{\theta_j}(v_i) = \begin{cases}
                1 & \text{if } i = j\\
                0 & \text{otherwise}.
            \end{cases}
        \end{align*} So in $\CH(\Delta)$, we have
        \[
            0 = \tilde{\theta_j} = x_j + \sum_{\rho_k \notin \tau} a_kx_k.
        \] Hence we can eliminate all $x_j$ for $\rho_j \in \tau$ by writing it as a linear combination of $x_k$ for $\rho_k \notin \tau$.
    \end{proof}

    From the inclusion of fans $\iota: \Star_\Delta \tau \hookrightarrow \Delta$, there is an induced map on Chow rings $ \iota^\ast: \CH(\Delta)\to \CH(\Star_\Delta \tau)$. We compose this with the map to links to get
    \begin{align*}
        (\pi^\ast)^{-1} \circ \iota^\ast : \CH(\Delta) &\to \CH(\Link_\Delta \tau)\\
        f &\mapsto f\vert_{\Link \tau}.
    \end{align*}

    For any Minkowski weight on $\Delta$, we can restrict it to a Minkowski weight on $\Link_\Delta\tau$ first by restricting onto $\Star_\Delta \tau$, then projecting. The map is given by
    \begin{align*}
        \res : \MW(\Delta) &\to \MW(\Link_{\Delta} \tau) \\
        (w_\sigma)_{\sigma \in \Delta} & \mapsto (w_\sigma)_{\sigma/\tau \in \Link_\Delta \tau}.
    \end{align*}

    We can also define a map $\CH(\Link_\Delta \tau) \to \CH(\Star_\Delta \tau) \to \CH(\Delta)$, where the first map is $\pi^\ast$, and the second map is multiplication by $x_\tau$. This map is given by $f \mapsto x_\tau \cdot \pi^\ast(f)$ for $f \in \CH(\Link_\Delta \tau)$.

    The above maps relate to each other in a simple adjunction formula. For any $w \in \MW(\Delta)$ and $f \in \CH(\Link_\Delta \tau)$, we have
    \begin{equation} \label{adjunction}
        \deg_w(x_\tau \cdot \pi^\ast(f)) = \deg_{\res w}(f).
    \end{equation} Indeed, for any maximal cone $\gamma \in \Link_\Delta \tau$, we have $\deg_w(x_\tau \cdot \pi^\ast(x_\gamma)) = w_{\tau + \gamma} = \deg_{\res(w)}(x_\gamma)$.

\subsection{Convexity}
Let $\Delta$ be a fan in $V$, and let $\ell \in \CH^1(\Delta)$. We say that $\ell$ is
    \begin{enumerate}
        \item \emph{positive} if $\tilde{\ell}(v_i) > 0 $ for some representative $\tilde{\ell}$ of $\ell$ and all $\rho_i \in \Delta$,
        \item \emph{non-negative} if $\tilde{\ell}(v_i) \geq 0 $ for some representative $\tilde{\ell}$ of $\ell$ and all $\rho_i \in \Delta$,
        \item \emph{strictly convex} if $\ell\vert_{\Link \tau}$ is positive for all $\tau \in \Delta$,
        \item \emph{convex} if $\ell\vert_{\Link \tau}$ is non-negative for all $\tau \in \Delta$.
    \end{enumerate}

The set of all strictly convex classes forms an open convex cone in $\CH^1(\Delta)$. We denote the set of all strictly convex classes by $K(\Delta)$. In the case where $K(\Delta)$ is nonempty, the set of convex classes is the closure of $K(\Delta)$.

If $f: \Delta_1 \to \Delta_2$ is a map of fans, then for any convex class $\ell \in \CH^1(\Delta_2)$, the pullback $f^\ast(\ell)$ is also convex. Non-negative representatives pull back as piecewise linear functions to non-negative piecewise linear functions. If $\tau$ maps to $\sigma$ under $f$, then $f$ defines a map $f_\tau: \Link_{\Delta_1} \tau \to \Link_{\Delta_2} \sigma$. Then for a non-negative representative $\tilde{\ell}$, $f^\ast(\tilde\ell)\vert_{\tau} = f_\tau^\ast(\tilde{\ell}\vert_{\sigma})$, which is non-negative.

In the case where $\CH(\Delta)$ satisfies Poincar\'e duality, we say $\CH(\Delta)$ satisfies the \emph{Hard Lefschetz property} if for all strictly convex $\ell \in K(\Delta)$, $(\CH(\Delta), \ell)$ satisfies the Hard Lefschetz property. Similarly, we say $\CH(\Delta)$ satisfies the \emph{Hodge Riemann relations} if for all strictly convex $\ell \in K(\Delta)$, $(\CH(\Delta), \ell)$ satisfies the Hodge Riemann relations.

\begin{proposition}\label{HR-implies-HL}
    Let $\Delta$ be a in $V$ of dimension $d$. Suppose that $\CH(\Delta)$ satisfies Poincar\'e duality, and for every ray $\rho \in \Delta^1$, the ring $\CH(\Link_\Delta \rho)$ satisfies the Hodge Riemann relations. Then $\CH(\Delta)$ satisfies the Hard Lefschetz property.
\end{proposition}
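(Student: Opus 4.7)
My plan is to fix any strictly convex $\ell \in K(\Delta)$ and show that multiplication by $\ell^{d-2i}$ is an isomorphism $\CH^i(\Delta) \to \CH^{d-i}(\Delta)$ for $0 \le i \le d/2$, which by Poincar\'e duality is equivalent to non-degeneracy of the bilinear form $B_\ell$ on $\CH^i(\Delta)$. The case $i = d/2$ is trivial since $\ell^{d-2i}=1$, so I take $x \in \CH^i(\Delta)$ with $\ell^{d-2i}x = 0$ and $i < d/2$, and aim to prove $x = 0$ by restricting to the links of rays and invoking the Hodge--Riemann relations there.

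The engine of the argument is the representation $\ell = \sum_\rho \tilde\ell(v_\rho)\, x_\rho$ in $\CH^1(\Delta)$, where strict convexity of $\ell$ guarantees that a representative $\tilde\ell$ can be chosen with $\tilde\ell(v_\rho) > 0$ for every ray $\rho$ of $\Delta$. Since the restriction $\CH(\Delta) \to \CH(\Link_\Delta \rho)$ is a ring homomorphism, restricting $\ell^{d-2i}x = 0$ to $\Link \rho$ gives $(\ell|_{\Link \rho})^{d-2i}(x|_{\Link \rho}) = 0$, which on the $(d-1)$-dimensional link is exactly the condition that $x|_{\Link \rho}$ lies in the primitive space $P^i_{\ell|_{\Link \rho}}$. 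Combining this with the adjunction formula (\ref{adjunction}), I would compute
\begin{align*}
0 = B_\ell(x,x) = \deg_\Delta(x^2 \ell^{d-2i}) &= \sum_\rho \tilde\ell(v_\rho)\, \deg_\Delta(x_\rho \cdot x^2 \ell^{d-2i-1})\\
&= \sum_\rho \tilde\ell(v_\rho)\, Q_{\ell|_{\Link \rho}}(x|_{\Link \rho}, x|_{\Link \rho}).
\end{align*}
Multiplying through by $(-1)^i$, each summand is non-negative by the hypothesized Hodge--Riemann relations on $\Link \rho$ applied to the primitive class $x|_{\Link \rho}$. Since the coefficients $\tilde\ell(v_\rho)$ are strictly positive, every term must vanish, and the definiteness clause of Hodge--Riemann forces $x|_{\Link \rho} = 0$ for each ray $\rho$.

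To conclude that $x = 0$, I use adjunction once more: for any ray $\rho$ and any $y \in \CH^{d-i-1}(\Delta)$, $\deg_\Delta(x \cdot x_\rho \cdot y) = \deg_{\Link \rho}(x|_{\Link \rho} \cdot y|_{\Link \rho}) = 0$, so $x \cdot x_\rho = 0$ in $\CH^{i+1}(\Delta)$ by Poincar\'e duality on $\Delta$. Since every monomial $x_\sigma$ of degree $d - i \ge 1$ factors as $x_\rho \cdot x_{\sigma \setminus \rho}$ for some ray $\rho \subset \sigma$, the class $x$ annihilates all of $\CH^{d-i}(\Delta)$, and a final application of Poincar\'e duality yields $x = 0$. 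The only nontrivial observation, which I expect to be the conceptual crux, is the identification of $x|_{\Link \rho}$ as automatically primitive on the link (so that Hodge--Riemann supplies a sign); once this is in place, the identity $\ell = \sum_\rho \tilde\ell(v_\rho) x_\rho$ repackages $B_\ell$ as a strictly positive linear combination of link quadratic forms, and the Hodge index does the rest without any deformation argument.
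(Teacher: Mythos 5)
Your proposal is correct and follows essentially the same route as the paper: expand $\ell=\sum_\rho \tilde\ell(v_\rho)x_\rho$ with positive coefficients, use the adjunction formula to rewrite $\deg(x^2\ell^{d-2i})$ as a positive combination of link degrees, observe that $x|_{\Link\rho}$ is primitive so the Hodge--Riemann relations on the links force $x|_{\Link\rho}=0$ for every ray, and conclude $x=0$ via Poincar\'e duality. Your final step is just a slightly more spelled-out version of the paper's ``$g\cdot x_i=0$ for all $i$, hence $g=0$.''
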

\begin{proof}
    By Poincar\'e duality, the spaces $\CH^i(\Delta)$ and $\CH^{d-i}(\Delta)$ are isomorphic, so it remains to prove that for a strictly convex class $\ell \in K(\Delta)$, the map given by multiplication with $\ell^{d-2i}$ is injective. Now suppose $\ell^{d-2i}\cdot g = 0$ for some $g \in \CH^{i}(\Delta)$, which in turn implies $\deg(g \cdot \ell^{d-2i}g) = 0$.

    Since $\ell$ is strictly convex, there is a representative such that
    \begin{align*}
        \ell = \sum_{\rho_i \in \Delta} a_ix_i, \qquad a_i > 0.
    \end{align*} Then by the adjunction (\ref{adjunction}), we have
    \begin{align*}
        0=\deg(\ell^{d-2i}\cdot g^2) &= \sum_{\rho_i \in \Delta} a_i\deg(x_i\ell^{d-2i-1}g^2)\\
        &= \sum_{\rho_i \in \Delta} a_i\deg_{\Link\rho_i}((\ell^{d-2i-1}g^2)\vert_{\Link \rho_i}).
    \end{align*}

    Since for any ray $\rho \in \Delta$, $\ell^{d-2i}g\vert_{\Link \rho} = 0$, $g\vert_{\Link \rho_i}$ is primitive. Now by assumption, every link satisfies Hodge Riemann relations, so we have $(-1)^i\deg_{\Link\rho_i}(\ell_{\rho_i}^{d-2i-1}g^2) \geq 0$, and is equal to 0 if and only if $g\vert_{\Link\rho_i}=0$. Since each $a_i>0$, this shows that $g \vert_{\Link_{\rho_i}} = 0$ for every ray $\rho_j$. Then $g \cdot x_i = 0$ for all i, and hence $g=0$.
\end{proof}

\section{Star subdivisions}
For this section, let $\Delta$ be a fan in $V$, and let $\tau$ be a fixed two dimensional cone of $\Delta$ generated by $v_1,v_2$. Let $s: \hat{\Delta} \to \Delta$ be a star subdivision at $\tau$ by introducing a new ray $v_0 = v_1+v_2$. A Minkowski weight $w$ on $\Delta$ defines a Minkowski weight $\hat{w}$ on $\hat\Delta$ by $\hat{w}_\sigma = w_{s(\sigma)}$. We fix a weight $w \in \MW(\Delta)$, and denote the degree maps induced by $w, \hat w, \res(w)$ by $\deg_{\Delta}, \deg_{\hat\Delta}, \deg_{\Link_\delta \tau}$ respectively.

The Chow ring of a star subdivision can be decomposed orthogonally in terms of Chow ring of the original fan. We refer to \cite{HKY} for a proof of the following lemma.

    \begin{lemma}\label{ortho-decomp}
        Let $s: \hat{\Delta} \to  \Delta$ be a star subdivision at a 2-dimensional cone $\tau$. There is a decomposition
        \begin{align*}
            \CH(\hat{\Delta}) = \CH(\Delta)\oplus x_0\CH(\Link_\Delta\tau),
        \end{align*} which is orthogonal with respect to the Poincar\'e paring $(x,y) \mapsto \deg(x,y)$.
    \end{lemma}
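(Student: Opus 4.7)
The plan is to exhibit the decomposition via two natural pullback maps and establish orthogonality through the adjunction formula combined with a compatibility between the restrictions to the two links.

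Set $s^\ast\colon \CH(\Delta) \to \CH(\hat\Delta)$ to be the pullback along the star subdivision; in Stanley-Reisner coordinates it acts by $s^\ast(x_j) = x_j$ for $j \ne 1,2$ and $s^\ast(x_i) = x_i + x_0$ for $i \in \{1,2\}$, since $v_0 = v_1 + v_2$ lies in the relative interior of $\tau$ and linearity over $\tau$ forces $x_i(v_0) = 1$. Define $\psi\colon \CH(\Link_\Delta\tau) \to \CH(\hat\Delta)$ by $\psi(g) = x_0 \cdot \tilde g$, where $\tilde g$ reinterprets the variables $x_\rho$ of $\Link_\Delta\tau$ inside $\CH(\hat\Delta)$; this is well-defined because each ray of $\Link_\Delta\tau$ forms a cone with $v_0$ in $\hat\Delta$.

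For orthogonality, observe that the natural quotient $V/\langle v_0\rangle \twoheadrightarrow V/\langle\tau\rangle$ (which collapses $\bar v_1 = -\bar v_2$) gives a map of fans $\Link_{\hat\Delta} v_0 \to \Link_\Delta\tau$ fitting into a commutative square with $s$ and the two star-to-link projections. Dually one obtains a ring homomorphism $\iota_\tau\colon \CH(\Link_\Delta\tau) \to \CH(\Link_{\hat\Delta} v_0)$, $x_\rho \mapsto x_\rho$, and the compatibility
\[
s^\ast(f)\vert_{\Link_{\hat\Delta} v_0} = \iota_\tau\bigl(f\vert_{\Link_\Delta\tau}\bigr) \qquad (f \in \CH(\Delta)).
\]
For $f \in \CH^p(\Delta)$ and $g \in \CH^{d-1-p}(\Link_\Delta\tau)$, apply the adjunction formula (\ref{adjunction}) to the ray $v_0$:
\[
\deg_{\hat\Delta}\bigl(s^\ast(f)\cdot\psi(g)\bigr) = \deg_{\Link_{\hat\Delta} v_0}\bigl(\iota_\tau(f\vert_{\Link_\Delta\tau})\cdot \iota_\tau(g)\bigr) = \deg_{\Link_{\hat\Delta} v_0}\bigl(\iota_\tau(f\vert_{\Link_\Delta\tau}\cdot g)\bigr).
\]
But $f\vert_{\Link_\Delta\tau}\cdot g$ lives in $\CH^{d-1}(\Link_\Delta\tau)$, which vanishes because $\dim \Link_\Delta\tau = d-2$. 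Orthogonality follows.

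For the remaining claims, both $s^\ast$ and $\psi$ are injective: the former because $s$ is a surjective map of fans, and the latter by Lemma \ref{Link-Star} combined with the observation that multiplication by $x_0$ on the image of $\tilde\pi^\ast$ is injective. A combinatorial count yields $\dim \CH^k(\hat\Delta) = \dim \CH^k(\Delta) + \dim \CH^{k-1}(\Link_\Delta\tau)$, so $\dim s^\ast\CH(\Delta) + \dim \psi(\CH(\Link_\Delta\tau)) = \dim \CH(\hat\Delta)$. Orthogonality together with the non-degenerate Poincar\'e pairings on each summand (inherited from the Poincar\'e dualities of $\CH(\Delta)$ and $\CH(\Link_\Delta\tau)$) then forces $s^\ast\CH(\Delta) \cap \psi(\CH(\Link_\Delta\tau)) = 0$, and hence the sum equals $\CH(\hat\Delta)$.

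The main obstacle will be verifying the compatibility $s^\ast(f)\vert_{\Link_{\hat\Delta} v_0} = \iota_\tau(f\vert_{\Link_\Delta\tau})$ at the level of Chow rings, since it requires aligning the choice of linear relations used in Lemma \ref{Link-Star} to restrict to $\Link_\Delta\tau$ with those used to restrict to $\Link_{\hat\Delta} v_0$. Once this is in hand, orthogonality collapses to the trivial vanishing $\CH^{d-1}(\Link_\Delta\tau) = 0$, and the rest of the proof is linear algebra.
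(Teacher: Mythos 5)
The paper does not prove this lemma itself; it cites \cite{HKY}, where the decomposition is established by explicitly generating $\CH(\hat\Delta)$ from the two pieces. Against that standard, your orthogonality half is essentially fine: pairing $s^\ast f$ with $x_0\tilde g$ via the adjunction formula at the new ray $\rho_0$ and observing that the resulting class comes from $\CH^{d-1}(\Link_\Delta\tau)=0$ (the link has dimension $d-2$) is a correct and clean argument, and the compatibility $s^\ast(f)\vert_{\Link_{\hat\Delta}\rho_0}=\iota_\tau(f\vert_{\Link_\Delta\tau})$ that you flag as the ``main obstacle'' is not really one --- it follows from the commuting square of fan maps induced by $\langle v_0\rangle\subset\langle\tau\rangle$, with no delicate choice of linear relations needed.

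The genuine gap is in the decomposition itself, i.e.\ the spanning statement, which is where the actual content of the lemma lies. You dispose of it with ``a combinatorial count yields $\dim \CH^k(\hat\Delta)=\dim\CH^k(\Delta)+\dim\CH^{k-1}(\Link_\Delta\tau)$,'' but for a general simplicial fan the graded dimensions of $\CH$ are not read off from face numbers (the coordinate linear forms need not form a regular sequence), and this dimension identity is essentially equivalent to the lemma being proved, so it cannot be assumed as input. The reference proof instead shows generation directly, using $s^\ast x_i = x_i + x_0$ for $i=1,2$ together with the relation $x_1x_2=0$ in $\CH(\hat\Delta)$ (so $x_0^2 = s^\ast(x_1x_2)-x_0(x_1+x_2)$) to rewrite every monomial into $s^\ast\CH(\Delta)+x_0\CH(\Link_\Delta\tau)$; that reduction is the step your proposal omits. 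Two further weak points: injectivity of $s^\ast$ ``because $s$ is a surjective map of fans'' is not an argument (one needs, e.g., $\deg_{\hat\Delta}(s^\ast a\cdot s^\ast b)=\deg_\Delta(ab)$ together with nondegeneracy on $\CH(\Delta)$, or a pushforward with $s_\ast s^\ast=\mathrm{id}$), and your trivial-intersection step silently assumes Poincar\'e duality for $\CH(\Delta)$ and $\CH(\Link_\Delta\tau)$, hypotheses not present in the statement, as well as the computation $\deg_{\hat\Delta}(x_0^2\cdot s^\ast\pi^\ast f)=-\deg_{\Link_\Delta\tau}(f)$ (done later in the paper, with a sign) to see that the pairing restricted to the second summand is nondegenerate.
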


    Note that a class $\ell \in \CH^1(\Delta)$ acts on each summand. If $\ell$ satisfies the Hard Lefschetz property and Hodge Riemann relations on each summand, then it satisfies Hard Lefschetz and Hodge Riemann on the sum.

    \begin{proposition}\label{pullbackHLHR}
        Let $\ell \in \CH^1(\Delta)$. Then
        \begin{enumerate}
            \item If $(\CH(\Delta), \ell)$  and $(\CH(\Link_\Delta \tau), \ell\vert_{\Link \tau})$ satisfy the Hard Lefschetz property, then so does $(\CH(\hat{\Delta}), s^\ast\ell)$.
            \item If $(\CH(\Delta), \ell)$ and $(\CH(\Link_\Delta \tau), \ell\vert_{\Link \tau})$ satisfy the Hodge Riemann relations, then so does $(\CH(\hat{\Delta}), s^\ast\ell)$.
        \end{enumerate}
    \end{proposition}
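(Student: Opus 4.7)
The plan is to use the orthogonal decomposition of Lemma \ref{ortho-decomp} to reduce Hard Lefschetz and Hodge Riemann for $(\CH(\hat\Delta), s^*\ell)$ to the assumed statements on the two summands $\CH(\Delta)$ and $\CH(\Link_\Delta \tau)$. The starting observation is that the Lefschetz bilinear form $B_{s^*\ell}$ on $\CH(\hat\Delta)$ is block diagonal with respect to the decomposition $\CH(\hat\Delta) = \CH(\Delta)\oplus x_0\CH(\Link_\Delta\tau)$. Indeed, $s^*\ell$ lies in the subring $s^*\CH(\Delta)$, so multiplication by $(s^*\ell)^{d-2i}$ preserves the first summand; paired with orthogonality of the decomposition under $\deg$, the off-diagonal blocks of $B_{s^*\ell}$ vanish.

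Next I would identify the two diagonal blocks with known Lefschetz pairings. On the first summand $\CH(\Delta) \hookrightarrow \CH(\hat\Delta)$, the degree map $\deg_{\hat\Delta}$ restricts (via the Minkowski weight $\hat w = s^*w$) to $\deg_\Delta$, so $B_{s^*\ell}|_{\CH(\Delta)} = B_\ell^\Delta$; HL and HR on this block are then just the hypotheses on $(\CH(\Delta),\ell)$. On the second summand, I would use the adjunction formula \eqref{adjunction} to rewrite
\[
B_{s^*\ell}(x_0 f, x_0 g) \;=\; \deg_{\hat\Delta}\bigl(x_0 \cdot (s^*\ell)^{d-2i}\, x_0 fg\bigr) \;=\; \deg_{\Link_{\hat\Delta} v_0}\bigl((s^*\ell)^{d-2i} x_0 fg \bigr|_{\Link v_0}\bigr),
\]
and then express $s^*\ell|_{\Link v_0}$ and $x_0|_{\Link v_0}$ in terms of $\ell|_{\Link_\Delta \tau}$ together with the relations induced by $v_0 = v_1+v_2$. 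After this identification the second block becomes a scalar multiple of $B_{\ell|_{\Link\tau}}$ on $\CH^{i-1}(\Link_\Delta\tau)$, with an overall sign $-1$ produced by $x_0^2$ restricting (up to positive-convex factors) to minus a convex class on the link.

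With the two blocks identified, both parts of the proposition fall out. For (1), HL is equivalent to non-degeneracy of $B_{s^*\ell}$; since each block is non-degenerate by the HL hypotheses, so is their orthogonal sum. For (2), I would invoke Lemma \ref{HR-signature}: the signature of $Q_{s^*\ell}$ on $\CH^m(\hat\Delta)$ is the sum of its signatures on the two blocks. The sign twist from $x_0^2$ converts the signature $\sum_{i=0}^{m-1}(-1)^i\dim P^i_{\ell|_\tau}$ on the link into a signature $\sum_{i=1}^{m}(-1)^{i}\dim P^{i-1}_{\ell|_\tau}$, so that adding the $\CH(\Delta)$ contribution yields precisely $\sum_{i=0}^m (-1)^i \dim P^i_{s^*\ell}(\hat\Delta)$, giving HR by Lemma \ref{HR-signature}.

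The main obstacle is the third summand analysis above: making the identification of $(x_0\CH(\Link_\Delta\tau),\,s^*\ell)$ with $(\CH(\Link_\Delta\tau),\,\ell|_{\Link\tau})$ precise, including the crucial sign from $x_0^2$. One must carefully restrict $x_0$ and $s^*\ell$ to $\Link_{\hat\Delta} v_0$ (which has the combinatorics of a join of two points with $\Link_\Delta \tau$) and use linear relations in $\CH(\hat\Delta)$ coming from $v_0 = v_1+v_2$. Once this block-level dictionary is set up, transferring Hard Lefschetz and Hodge Riemann through the orthogonal decomposition is routine.
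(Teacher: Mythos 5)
Your proposal follows essentially the same route as the paper: the orthogonal decomposition $\CH(\hat\Delta)=\CH(\Delta)\oplus x_0\CH(\Link_\Delta\tau)$ of Lemma \ref{ortho-decomp}, block-diagonality of the Lefschetz pairing, and the sign flip produced by $x_0^2$; the one step you flag as the main obstacle is exactly what the paper carries out, by writing $x_0=-x_1-s^\ast\pi^\ast g$ via a global linear relation and applying the adjunction formula (\ref{adjunction}) to the cone on $\rho_0,\rho_1$, giving $\deg_{\hat\Delta}(x_0^2\, s^\ast\pi^\ast f)=-\deg_{\Link_\Delta\tau}(f)$. Your signature-counting finish for (2) via Lemma \ref{HR-signature} is a harmless variant of the paper's direct verification of positive definiteness on the decomposed primitive part.
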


    \begin{proof}
         From Lemma \ref{ortho-decomp}, we have an orthogonal decomposition
            \begin{align*}
                \CH(\hat{\Delta}) = \CH(\Delta)\oplus x_0\CH(\Link_\Delta\tau).
            \end{align*} Now for any $0 \leq i \leq d/2$, multiplication by $\ell^{d-2i}$ defines isomorphisms on the summands
            \begin{align*}
                \ell^{d-2i}&: \CH^i(\Delta) \to \CH^{d-2i}(\Delta),\\
                \ell\vert_{\Link \tau}^{d-2i}&: \CH^{i-1}(\Link_\Delta\tau) \to \CH^{d-2i-1}(\Link_\Delta\tau)
            \end{align*} by the assumption that they satisfies Hard Lefschetz. So multiplication by $\ell^{d-2i}$ defines an isomorphism $\CH^i(\hat\Delta) \xrightarrow{\sim} \CH^{d-2i}(\hat\Delta)$. This proves (1), and in particular also proves $\CH(\hat{\Delta})$ satisfies Poincar\'e duality.

        The kernel of the multiplication by $\ell^{d-2i+1}$ map decomposes as a direct sum
            \[
                P_\ell^i(\hat\Delta) = P_\ell^i(\Delta) \oplus x_0P_{\ell\vert_{\Link \tau}}^{i-1}(\Link_\Delta\tau),
            \] orthogonal with respec to $B_\ell$. By assumption $(-1)^iQ_\ell$ is positive definite on the first summand, we prove the same for the second summand.

        Now let $f \in \CH^{d-2}(\Link_\Delta \tau)$. Note that we can find a global linear function of the form $\tilde\theta= x_1 + x_0 + s^\ast\pi^\ast g$ on $\Star_\Delta \tau$ for some $g \in \CH^1(\Link_\Delta \tau)$. Then we have
            \begin{align*}
                \deg_{\hat\Delta}(x_0^2\cdot s^\ast \pi^\ast f) &= \deg_{\hat\Delta}(x_0(-x_1 - s^\ast\pi^\ast g)s^\ast\pi^\ast f) \\
                &= -\deg_{\hat\Delta}(x_0x_1 s^\ast\pi^\ast f) - \deg_{\hat\Delta}(x_0 s^\ast \pi^\ast (gf)).
            \end{align*} Since $\CH(\Link_\Delta \tau)$ has top degree $d-2$, $gf = 0$. Let $\tau'$ be the cone formed by the rays $\rho_0,\rho_1$. Note that $\Link_{\Delta}\tau = \Link_\Delta \tau'$. So by the adjunction (\ref{adjunction}), we have
            \[
              \deg_{\hat\Delta}(x_0^2 \cdot s^\ast\pi^\ast f) = -\deg_{\hat\Delta}(x_0x_1s^\ast\pi^\ast f) = - \deg_{\Link_\Delta \tau}(f).
            \] So the action of $Q_\ell$ on the second summand is
            \begin{align*}
                (-1)^i Q_\ell(x_0f) &= (-1)^i\deg_{\hat\Delta}(\ell^{d-2i}x_0^2 f^2)\\ & = (-1)^{i-1}\deg_{\Link_\Delta\tau}(f^2\ell\vert_{\Link \tau}^{d-2i}) = (-1)^{i-1}Q_{\ell\vert_{\Link \tau}}(f).
            \end{align*} By assumption $ (-1)^{i-1}Q_{\ell\vert_{\Link \tau}}$ is positive definite on the $\Link_\Delta\tau$, so $(-1)^iQ_\ell$ is positive definite on the second summand. This concludes the proof for (2).
    \end{proof}

\section{Matroids}

Let $M$ be a matroid of rank $d+1$ on the set $E = \{1,2, \dots, n+1\}$. Let $e_1, \dots, e_{n+1}$ be the standard basis of $\RR^E$. Recall that a matroid is defined by its lattice of flats $L$. For any subset $S \subset E$, let $e_S = \sum_{i \in S}e_i$. The Bergman fan $\Delta_M$ of $M$ is a simplicial fan embedded in $\RR^E/e_E$ defined as follows. For each flat $F$ of $M$, define a ray generated by the image of $e_F$ in the quotient. For each flag of nontrivial flats
\[
    \mathcal{F}: \emptyset \neq F_1 < F_2 < \dots < F_s \neq E,
\] there is a cone $\sigma_{\mathcal{F}} = \text{cone}\{e_{F_1}, \dots, e_{F_m}\} \in \Delta_M$. We further define the Chow ring $\CH(M)$ of $M$ as the Chow ring $\CH(\Delta_M)$ of the Bergman fan.

\subsection{Intervals}

Let $F,G$ be two flats of $M$. The interval $[F,G]$ is the sublattice
\[
    [F,G] = \{H \in L: F \leq H \leq G\}.
\] Each such lattice defines a matroid on $G \setminus F$, where for a flat $H$ of $M$ such that $F \leq H \leq G$, there is a flat $H \setminus F$. For simplicity of notation, we denote the induced matroid also as $[F,G]$. Let $F$ be a nontrivial flat of $M$. We define the localization of $M$ at $F$ as the matroid $ M^F = [\emptyset, F]$ and the contraction of $M$ by $F$ as the matroid $M_F = [F,E]$. We can alternatively describe the matroid of an interval as $[F,G] = (M_F)^G$.

\begin{lemma}\label{Bergman-link-product}
    Let $\sigma$ be a cone of the Bergman fan $\Delta_M$. Then $\Link_{\Delta_M}\sigma$ is a product of Bergman fans.
\end{lemma}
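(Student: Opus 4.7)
The cone $\sigma$ corresponds to a chain of nontrivial flats $\mathcal{F}\colon F_1 < F_2 < \dots < F_s$, with the convention $F_0 = \emptyset$ and $F_{s+1} = E$. The plan is to show that the natural splitting of a chain compatible with $\mathcal{F}$ into sub-chains lying in each interval $[F_i, F_{i+1}]$ induces an isomorphism of fans
\[
    \Link_{\Delta_M}\sigma \cong \Delta_{[F_0,F_1]} \times \Delta_{[F_1,F_2]} \times \dots \times \Delta_{[F_s,F_{s+1}]}.
\]

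First I would describe the combinatorial data: a cone $\tau$ of $\Link_{\Delta_M}\sigma$ corresponds to a chain of nontrivial flats $G_1 < \dots < G_t$ with no $G_j$ equal to any $F_i$, such that $\{F_1,\dots,F_s,G_1,\dots,G_t\}$ is still a chain. Each $G_j$ must therefore lie strictly between some $F_i$ and $F_{i+1}$, and the flats in the open interval $(F_i,F_{i+1})$ of $L(M)$ are exactly the nontrivial flats of the interval matroid $[F_i,F_{i+1}]$ (via $G \mapsto G \setminus F_i$). Grouping the $G_j$'s by which open interval they fall into decomposes any such chain into an independent choice, for each $i \in \{0,\dots,s\}$, of a chain of nontrivial flats in $[F_i,F_{i+1}]$. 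This gives a bijection between cones of $\Link_{\Delta_M}\sigma$ and cones of the product of Bergman fans on the right-hand side.

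Next I would check that the bijection respects the embedding. The ambient space of $\Delta_M$ is $V = \RR^E/\langle e_E\rangle$, and $\Span(\sigma)$ is the subspace spanned by (the classes of) $e_{F_1},\dots,e_{F_s}$. Using the partition $E = \bigsqcup_{i=0}^{s}(F_{i+1}\setminus F_i)$, the coordinate splitting $\RR^E = \bigoplus_{i=0}^s \RR^{F_{i+1}\setminus F_i}$ sends $e_E$ and each $e_{F_j}$ to sums of the vectors $e_{F_{i+1}\setminus F_i}$, so modulo $\langle e_E, e_{F_1},\dots,e_{F_s}\rangle$ each $e_{F_{i+1}\setminus F_i}$ becomes zero. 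This identifies
\[
    V/\Span(\sigma) \;\cong\; \bigoplus_{i=0}^s \RR^{F_{i+1}\setminus F_i}/\langle e_{F_{i+1}\setminus F_i}\rangle,
\]
which is the direct sum of the ambient spaces of the Bergman fans $\Delta_{[F_i,F_{i+1}]}$. For a flat $G$ with $F_i < G < F_{i+1}$, the relation $e_G = e_{F_i} + e_{G\setminus F_i}$ shows that the image of $e_G$ in the quotient is precisely $e_{G\setminus F_i}$ in the $i$-th factor, matching the ray generator of $\Delta_{[F_i,F_{i+1}]}$ associated to the flat $G\setminus F_i$.

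The main (minor) obstacle is keeping the notation clean for the two degenerate intervals at the ends, $[\emptyset,F_1] = M^{F_1}$ and $[F_s,E] = M_{F_s}$, and making sure that the decomposition of $V/\Span(\sigma)$ is indeed as a direct sum of the correct quotient spaces rather than just an abstract isomorphism of fans; the coordinate splitting above handles this uniformly. Combining the combinatorial bijection of chains with this identification of ambient spaces and ray generators yields the claimed isomorphism of fans, proving the lemma.
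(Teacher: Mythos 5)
Your proposal is correct and follows essentially the same approach as the paper: the chain of a link cone splits into independent subchains in the intervals $[F_i,F_{i+1}]$, and the ambient quotient $V/\Span(\sigma)$ splits as the direct sum of the spaces $\RR^{F_{i+1}\setminus F_i}/\langle e_{F_{i+1}\setminus F_i}\rangle$, matching ray generators via $e_G \mapsto e_{G\setminus F_i}$. The only difference is presentational: the paper proves the two-factor statement for a ray and then handles a general cone by induction on its rays, whereas you treat the general cone directly in one step, which is slightly cleaner.
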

\begin{proof}
    We prove first for the case where $\sigma$ is a ray. Let $F$ be a flat of $M$, and $\rho_F$ the corresponding ray. Then a cone $ \tau \in \Link_{\Delta_M} \rho_F$ is obtained by taking a cone $\overline{\tau}$ in $\Delta_M$ that contains $\rho_F$, then removing $\rho_F$ to get $\tau = \overline\tau \setminus \rho_F$. In terms of chains, a cone $\tau$ in $\Link_{\Delta_M} \rho_F$ is obtained by taking a chain of flats which includes $F$, then removing $F$.

    Each such chain is defined by a chain in $[\emptyset, F]$ and a chain in $[F,E]$. This shows, as abstract fans, we have
    \[
        \Link_{\Delta_M}\rho = \Delta_{[\emptyset, F]} \times \Delta_{[F,E]}.
    \]

    We now show the link is also embedded with the product embedding. Recall that $\Link_{\Delta_M}\rho$ is embedded in $V/\Span(\rho)$. In the matorid case, the ambient space is $\RR^E/(e_E)/\Span(\rho_F) = \RR^E/(e_E,e_F) \cong (\RR^F/e_F) \times (\RR^{E\bs F}/e_{E\bs F})$. In our case, the product fan $\Delta_{[\emptyset, F]} \times \Delta_{[F,E]}$ is embedded in the same space. Let $\mathcal{F}$ be the chain of flats of a cone in $\Link_{\Delta_M} \rho$. Let $\mathcal{F}_-$ be the subchain of $\mathcal{F}$ consisting of flats contained in $F$, and let $\mathcal{F}_+$ be the subchain of flats containing $F$. Then $\mathcal{F}_-$ is embedded in $(\RR^F/e_F)$, and $\mathcal{F}_+$ is embedded in $(\RR^{E\bs F}/e_{E\bs F})$.

    For a general cone $\sigma$, choose any ray $\rho$. Define $\sigma'$ to be the subcone of $\sigma$ which includes all rays of $\sigma$ except $\rho$. Then defining $L = \Link_{\Delta_M} \sigma$, the conclusion follows inductively from the identity
    \[
        \Link_{\Delta_M} \sigma = \Link_{L}\rho.
    \]
\end{proof}

\subsection{Deletion}
For any element $i \in E$, define the deletion of $i$ from $M$ as the matroid $M \setminus i$ on $E = \setminus i$ where the flats of $M \setminus i $ are $F \setminus \{i\}$ for flats $F$ of $M$. For any $i$, the fan $\Delta_{M \bs i}$ is the image of $\Delta_M$ under the projection $\pi: \RR^E/e_E \to \RR^E/(e_E, e_i)$.

An element $i \in E$ is a coloop is $M \setminus i$ has lower rank than $M$. The projection map can be decomposed further as
\[
    \pi:\Delta_M = \Delta_0 \xrightarrow{\pi_0} \Delta_1 \xrightarrow{\pi_1} \dots \to \Delta_k \xrightarrow{\pi_k} \Delta_{M \bs i}.
\] Here, for each $j = 0, \dots, k-1$, the map $\pi_j: \Delta_j \to \Delta_{j+1}$ is a star subdivision at a two dimensional cone. The last map $\pi_k: \Delta_k \to\Delta_{M \bs i}$ is a projection map. We outline here the decomposition, but may omit proofs at places. For a more detailed explanation we refer to \cite{HKY}.

Let $S_i = \{F_1, \dots, F_k\}$ be the set of all nontrivial subsets $F \subset E$ such that $F$ and $F \cup i$ are both flats of $M$, with the ordering chosen to refine the partial order of the lattice. For $1 \leq j \leq k$, define the equivalence relation $\sim_j$ by $F_l \sim_j F_l \cup i$ for $l \leq j$, with no relation on the other elements. The poset $L/ \sim_j$ defines a fan $\Delta_j$, where the rays of $\Delta_j$ are the nontrivial equivalence classes in $L/\sim_j$, and cones are images of chains of nontrivial flats in $L$. The embedding is given by mapping the equivalence classes $\{F_l, F_l \cup i\}$ to the ray generated by $e_{F_l}$ for $l \leq j$, and mapping other flats $F$ to the ray generated by $e_F$ as before. For convenience, we may denote an equivalence class by a representative.

The map $\Delta_{j-1} \to \Delta_j$ is the star subdivision at the cone $\tau: i < F_j \cup i$. Before the subdivision, in $\Delta_j$, $F_j$ and $F_j \cup i$ corresponds to the same ray. After the subdivision, in  $\Delta_{j-1}$, the ray $\rho_{F_j \cup i}$ is the new ray that subdivides the cone $i \leq F_j$.

The last map $\Delta_k \to \Delta_{M \bs i}$ is defined by projecting $V \to V/\Span(e_i)$. The fan $\Delta_k$ has exactly one more ray than $\Delta_{M \bs i}$, namely the ray generated by $e_i$, which is contracted to $0$ under $\pi_k$. The projection is bijective on the set of remaining rays. The map $\pi_k$ induces a map of Chow rings $\CH(M \setminus i) \to \CH(\Delta_k)$. This map is futher an isomorphism.

\begin{lemma}\label{subdivision-link-product}
    Let $\tau$ be the cone $i < F_j \cup i$ in $\Delta_m$ for some $m =0, \dots, k$. Then $\Link_{\Delta_m} \tau = \Delta_{[i, F_j \cup i]} \times \Delta_{[F_j \cup i, E]}$ is a product of Bergman fans. In particular, $\Link_{\Delta_m} \tau$ is the same fan for all $m$.
\end{lemma}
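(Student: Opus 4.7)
The plan is to adapt the argument of Lemma \ref{Bergman-link-product} to the modified fans $\Delta_m$. A cone of $\Link_{\Delta_m}\tau$ comes from a cone of $\Delta_m$ containing $\tau$ by discarding the rays of $\tau$, and cones of $\Delta_m$ are images of chains of nontrivial flats of $M$ (viewed in $L/\sim_m$). Such a cone contains $\tau$ exactly when its chain contains both $i$ and $F_j\cup i$.

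Because $i$ is a minimal nontrivial flat containing $i$ and $\{i\}\notin S_i$ (so $\sim_m$ leaves $[i]$ alone), no nontrivial flat lies strictly below $i$. Moreover, every flat strictly between $i$ and $F_j\cup i$, or strictly above $F_j\cup i$, must contain $i$, so the only identifications $\sim_m$ can make on such flats are of the form $F_l\cup i \sim_m F_l$ for $l\leq m$; in each such equivalence class there is a unique representative containing $i$. Thus a chain of classes in $L/\sim_m$ extending $\{[i],[F_j\cup i]\}$ is the same datum as a chain of flats of $M$ extending $\{i,F_j\cup i\}$, which splits canonically into a chain of nontrivial flats of the interval matroid $[i,F_j\cup i]$ together with a chain of nontrivial flats of $[F_j\cup i,E]$. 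This yields $\Link_{\Delta_m}\tau = \Delta_{[i,F_j\cup i]}\times\Delta_{[F_j\cup i,E]}$ as abstract simplicial fans.

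The main obstacle is checking that the embeddings agree, since the embedding rule of $\Delta_m$ places the rays for identified classes $\{F_l,F_l\cup i\}$ at $e_{F_l}$ rather than $e_{F_l\cup i}$. Regardless of $m$, we have $\Span(\tau) = \Span(e_i, e_{F_j})$ (as $e_{F_j\cup i} = e_{F_j} + e_i$), so the link sits inside
\[
\RR^E/(e_E, e_i, e_{F_j}) \cong (\RR^{F_j}/e_{F_j})\times(\RR^{E\bs(F_j\cup i)}/e_{E\bs(F_j\cup i)}),
\]
which is precisely the product of the ambient spaces of $\Delta_{[i,F_j\cup i]}$ and $\Delta_{[F_j\cup i,E]}$. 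For a flat $H$ with $i\subsetneq H\subsetneq F_j\cup i$ whose class is represented by $H$ itself, the relation $e_H\equiv e_{H\bs\{i\}}\pmod{e_i}$ places the projected ray correctly in $\RR^{F_j}/e_{F_j}$; if instead $H=F_l\cup i$ with $l\leq m$, the embedding rule places the ray at $e_{F_l}$, which still equals $e_{H\bs\{i\}}$. An analogous computation modulo $e_{F_j}$ handles the upper factor. The ``in particular'' assertion follows at once, as the description above depends only on the lattice $L$ of flats of $M$, not on $m$.
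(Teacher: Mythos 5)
Your proof is correct and follows essentially the same route as the paper: identify cones of the link with chains of flats through $i$ and $F_j\cup i$, observe that each $\sim_m$-class met by such a chain has a unique representative containing $i$ (so the quotient is injective there), and check that the embeddings agree modulo $\Span(e_i,e_{F_j})$ since $e_{F_l\cup i}=e_{F_l}+e_i$. The only cosmetic difference is that you re-derive the splitting into the two interval matroids directly, whereas the paper deduces $\Link_{\Delta_m}\tau=\Link_{\Delta_M}\tau$ and then cites Lemma \ref{Bergman-link-product}.
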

\begin{proof}
    The cones in $\Link_{\Delta_m} \tau$ are images of chains that includes $i$ and $F_j \cup i$, but with the two cones removed. So $\Link_{\Delta_m} \tau$ is the image of $\Delta_{[i,F_j\cup i]} \times \Delta_{[F_j \cup i, E]}$ under the quotient. However the quotient is injective when restricted to the above product, as no such fans contain both $F_l$ and $F_l \cup i$ for any $l \leq k$. This shows that $\Link_{\Delta_m} \tau = \Link_{\Delta_M} \tau$, which by Lemma \ref{Bergman-link-product} is a product of Bergman fans.

        The rays of $\Delta_m$ are embedded in the same way as the rays of $\Delta_M$, except for $\rho_{F_l \cup i}$ for $1 \leq l \leq m$, which are embedded as the ray generated by $e_{F_l}$. However, both of $\Link_{\Delta_m} \tau$ and $\Link_{\Delta_M} \tau$ are embedded in $\RR^E/(e_E, e_i, e_F)$. So each $F_l \cup i$ is embedded as the ray generated by $e_{F_l} + e_i = e_{F_l}$ for both fans. This shows $\Link_{\Delta_m} \tau = \Link_{\Delta_M} \tau$ as embedded fans.
\end{proof}

We need also a lemma to ensure pullback along each map preserves convexity properties.

\begin{lemma}\label{convexity}
    Let $\ell \in \CH^1(M \setminus i)$ be a strictly convex class. Denote the pullbacks of $\ell$ as $\ell_m = \pi_m^\ast \circ \pi_{j+1}^\ast \circ \dots \circ \pi_k^\ast(\ell)$ for each $m = 0, \dots, k$. Then $\ell_m$ is convex on $\Delta_m$, and strictly convex on $\Link_{\Delta_m} (i < F_j \cup i)$ for all $0 \leq m,j \leq k$.
\end{lemma}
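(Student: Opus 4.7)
The first assertion, convexity of $\ell_m$ on $\Delta_m$, is immediate from the general fact recalled in Section~3.3 that pullback of a convex class along a morphism of fans is convex: applying this to $\ell$, which is strictly convex and hence convex on $\Delta_{M\bs i}$, and to the composition $\pi_{\ge m} := \pi_k \circ \pi_{k-1} \circ \cdots \circ \pi_m : \Delta_m \to \Delta_{M\bs i}$, gives convexity of $\ell_m = \pi_{\ge m}^\ast \ell$ on $\Delta_m$.

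For the second assertion I would trace the underlying linear maps. Each $\pi_m$ for $m < k$ is a star subdivision whose underlying linear map on $\RR^E/e_E$ is the identity (only the combinatorial subdivision changes), while $\pi_k$ is the projection $\RR^E/e_E \to \RR^E/(e_E, e_i)$ that kills $e_i$. Thus the only ray of $\Delta_m$ contracted to $0$ by $\pi_{\ge m}$ is $\rho_i$. Writing $\tau_j = (i < F_j \cup i)$, strict convexity of $\ell_m$ on $L := \Link_{\Delta_m}\tau_j$ means, by definition, that for every cone $\gamma \in L$ the class $\ell_m|_{\Link_{\Delta_m}(\tau_j + \gamma)}$ is positive. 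Letting $\sigma$ be the smallest cone of $\Delta_{M\bs i}$ containing $\pi_{\ge m}(\tau_j + \gamma)$, the map $\pi_{\ge m}$ induces a morphism $\Link_{\Delta_m}(\tau_j + \gamma) \to \Link_{\Delta_{M\bs i}}\sigma$ under which $\ell_m|_{\Link(\tau_j + \gamma)}$ is the pullback of $\ell|_{\Link \sigma}$. Since $\ell$ is strictly convex, $\ell|_{\Link \sigma}$ admits a representative positive on every ray generator, and its pullback is positive on any ray of the source link whose image in $V/\Span(\sigma)$ is nonzero.

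The remaining step is to check that no ray of $\Link_{\Delta_m}(\tau_j + \gamma)$ is contracted. Since $\rho_i \in \tau_j$, the ray $\rho_i$ does not appear in the link. For any other ray $\rho_F$, chain-comparability of $F$ with the flat $\{i\}$ forces $i \in F$; one then checks by cases --- depending on whether $F \bs i$ is a flat of $M$ and whether it has been identified with $F$ by the equivalence $\sim_m$ --- that the generator of $\rho_F$ in $\Delta_m$ is always sent by $\pi_{\ge m}$ to $e_{F \bs i}$, which is the generator of $\rho_{F \bs i}$ in $\Delta_{M \bs i}$. The exclusions $F \ne F_j \cup i$ and $F$ not a flat occurring in $\gamma$, together with the fact that $F$ is determined uniquely by $F\bs i$ once $i \in F$, give $\rho_{F\bs i} \ne \rho_{F_j}$ and $\rho_{F\bs i}$ distinct from the image of every ray of $\gamma$, so $\rho_{F\bs i}$ is a nonzero ray of $\Link_{\Delta_{M\bs i}}\sigma$. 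The main obstacle is precisely this bookkeeping --- organizing the ways a ray of $\Delta_m$ can arise (as an equivalence class $\{F_l, F_l\cup i\}$ for $l\le m$, as a standalone ray from an unidentified pair for $l>m$, or from a flat $F$ whose counterpart $F\bs i$ fails to be a flat) and verifying each descends cleanly to a ray of $\Delta_{M\bs i}$ --- after which strict convexity on the link follows.
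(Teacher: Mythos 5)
Your proof is correct, but it is organized differently from the paper's. The paper first invokes Lemma \ref{subdivision-link-product} to identify $\Link_{\Delta_m}(i<F_j\cup i)$ with $\Link_{\Delta_k}(i<F_j\cup i)$ as embedded fans, reducing everything to $m=k$; it then observes that $\pi_k$ restricted to this link is an embedding onto a subfan of $\Link_{\Delta_{M\bs i}}F_j$ (both living in $\RR^E/(e_E,e_i,e_{F_j})$), so $\ell_k\vert_{\Link\tau}$ is just the restriction of the strictly convex class $\ell\vert_{\Link F_j}$ to a subfan, hence strictly convex. You skip the reduction to $m=k$ and instead unwind the definition of strict convexity cone-by-cone on each $\Delta_m$, using the induced maps $\Link_{\Delta_m}(\tau_j+\gamma)\to\Link_{\Delta_{M\bs i}}\sigma$ and checking that no ray of the source link is contracted; the ray bookkeeping you sketch (any flat in a chain through $\{i\}$ other than $\{i\}$ itself contains $i$, so $F\mapsto F\bs i$ is injective on the relevant rays and lands on genuine rays of $\Delta_{M\bs i}$) is exactly the combinatorial heart of the paper's embedding claim, just redistributed over all $\gamma$. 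Two small points you leave at sketch level but which do go through: positivity of the pulled-back representative at a ray needs its image not merely nonzero in $V/\Span(\sigma)$ but lying in the support of $\Link_{\Delta_{M\bs i}}\sigma$ with value positive there --- this holds because the image flats $F\bs i$ together with those of $\sigma$ form a chain of distinct nontrivial flats of $M\bs i$, hence span a simplicial cone, so the image ray is a ray of the link and is independent of $\sigma$; and the corner case $F\bs i=E\bs i$ cannot occur since $i$ is a non-coloop. The trade-off: the paper's route is shorter because it reuses the already-proved link lemma and the general fact that restricting a strictly convex class to a subfan preserves strict convexity, while yours is more self-contained (it never needs the equality of links across different $m$) at the cost of heavier case-by-case verification.
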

\begin{proof}

The class $\ell_m$ is the pullback of a convex class along the map of fans, so it is also convex.

By Lemma \ref{subdivision-link-product}, $\Link_{\Delta_m} (i < F_j \cup i) = \Link_{\Delta_k} (i < F_j \cup i)$ for every $m$. It suffices to prove the lemma for the case where $m = k$. The projection $\pi_k$ is surjective on the set of cones. So restricting to the link, the map $\pi_k\vert_{\Link}: \Link_{\Delta_k} (i < F_j \cup i) \to \Link_{\Delta_{M \bs i}} F_j$ defines an inclusion of fans. Furthermore, since both fans are embedded in $\RR^E/(e_E, e_i, e_{F_j})$, this is in fact an embedding of fans. The restriction of $\ell_j$ to $\Link_{\Delta_M} (i <  F_j\cup i)$ is the restriction of $\ell$ to $\Link_{\Delta_M} (i <  F_j\cup i)$ as a subfan of $\Link_{\Delta_{M \bs i}} F_j$, and is thus also strictly convex.

\end{proof}

\subsection{Proof of the main theorem}
We are now ready to prove Theorem \ref{maintheorem}, which states that the Chow ring $\CH(\Delta_M)$ satisfies the Hard Lefschetz property and Hodge Riemann relations.

\textit{Proof of Theorem \ref{maintheorem}.} We proceed by induction on the rank of $M$. The case where $\text{rank}(M) =0$ is trivial. Assume now the theorem holds for all matroids of rank $d$ or lower. Let $M$ be a matroid of rank $d+1$ on $E = \{1, \dots, n+1\}$.

Recall that $M$ is Boolean if every subset of $E$ is a flat. If $M$ has no non-coloops, then $M$ is Boolean. In this case, $\Delta_M$ is complete, and is the fan of a smooth projective toric variety. The Chow ring is then the cohomology of the toric variety $\CH(M) = \CH(X_{\Delta_M})$, which satisfies Hard Lefschetz and Hodge Riemann. Alternatively, to avoid algebraic geometry, we refer to \cite{FK} for a combinatorial proof of the Hard Lefschetz theorem and Hodge Riemann relations for complete simplicial fans.

In the case where $M$ has non-coloops, deleting every non-coloop gives the Boolean matroid of rank $d+1$. It then suffices to show that for any non-coloop $i \in M$, if $\CH(M\setminus i)$ satisfies the Hard Lefschetz properties and Hodge Riemann relations, then so does $\CH(M)$.

Suppose now $M$ has a non-coloop $i \in E$, and suppose $\CH(M \setminus i)$ satisfies the Hard Lefschetz properties and Hodge Riemann relations. Consider as above the sequence of maps
\[
    \Delta_M = \Delta_0 \xrightarrow{\pi_0} \Delta_1 \xrightarrow{\pi_1} \dots \to \Delta_k \xrightarrow{\pi_k} \Delta_{M \bs i}.
\]

Let $\ell \in \CH^1(M \setminus i)$ be a strictly convex class. By Lemma \ref{subdivision-link-product}, for each fan $\Delta_j$ and $\tau: i < F_j \cup i$, the link is $\Link_{\Delta_j} \tau = \Delta_{[i, F_j \cup i]} \times \Delta_{[F_j \cup i, E]}$. By induction hypothesis, both of $\Delta_{[i, F_j \cup i]}$ and $\Delta_{[F_j \cup i, E]}$ satisfies Hard Lefschetz and Hodge Riemann. By Lemma \ref{convexity} $\ell_j \vert_{\Link \tau}$ is strictly convex. Then $\ell_j \vert_{\Link \tau} = \ell' \otimes 1 + 1 \otimes \ell''$ for strictly convex classes $\ell' \in \CH^1(\Delta_{[i, F_j \cup i]})$, $\ell'' \in \CH^1(\Delta_{[F_j \cup i, E]})$. Then by Lemma \ref{product-HL-HR}, $(\Link_{\Delta_j} \tau, \ell_j \vert_{\Link \tau})$ satisfies Hard Lefschetz and Hodge Riemann. By Proposition \ref{pullbackHLHR}, $(\CH(\Delta_j), \ell_j)$ satisfies Hard Lefschetz and Hodge Riemann for every $j$. In particular, $(\CH(\Delta_M), \ell_0)$ satisfies the Hodge Riemann relations.

For every ray $\rho \in \Delta_M$ defined by a flat $F$, by Lemma \ref{Bergman-link-product}, $\Link_{\Delta_M} \rho = \Delta_{M_F} \times \Delta_{M^F}$. Again by induction hypotheses, $\Delta_{M_F}$ and $\Delta_{M^F}$ both satisfies Hard Lefschetz and  Hodge Riemann, and hence so does $\Link_{\Delta_M} \rho$. Since the links of each ray has rank $d$ or lower, by induction hypothesis and Proposition \ref{HR-implies-HL}, $\CH(M)$ satisfies the Hard Lefschetz property for all strictly convex classes in $\CH^1(M)$. Now the convex class $\ell_0 \in \partial K(\Delta_M)$ lies on the boundary of the cone of convex classes. Since $\ell_0$ satisfies Hodge Riemann relations, by Proposition \ref{HL-implies-HR}, we conclude that $\CH(M)$ also satisfies the Hodge Riemann relations. \hfill $\square$

\bibliographystyle{plain}
\bibliography{bib}{}

\end{document}